 % !TeX encoding = windows-1252
% !TeX spellcheck = <none>
\documentclass[a4paper,11pt]{amsart}
 \usepackage{float}
\usepackage{amssymb}
\usepackage{mathrsfs}
\usepackage{amsmath,amssymb,amsthm,latexsym,amscd,mathrsfs}
\usepackage{indentfirst}
\usepackage{stmaryrd}
\usepackage{graphicx}
\usepackage{extarrows}
\usepackage{multirow}
%%%%Marco's usepackages
\usepackage{latexsym}
\usepackage{amsfonts}
\usepackage{color}
\usepackage{pictexwd,dcpic}
\usepackage{graphicx}
\usepackage{psfrag}
\usepackage{hyperref}
\usepackage{comment}
\usepackage{enumerate}

\usepackage{verbatim} %\begin{comment}... \end{comment}
%?\usepackage{diagbox}
\usepackage{setspace}

 \setlength{\parindent}{2em}
 \setlength{\parskip}{3pt plus1pt minus2pt}
 \setlength{\baselineskip}{20pt plus2pt minus1pt}
 \setlength{\textheight}{21 true cm}
 \setlength{\textwidth}{14.5true cm}
  \setlength{\headsep}{10truemm}
  \addtolength{\hoffset}{-12mm}

\numberwithin{equation}{section} \theoremstyle{plain}
\newtheorem{thm}{Theorem}[section]

\newtheorem{prop}[thm]{Proposition}
\newtheorem{lem}[thm]{Lemma}
\newtheorem{cor}[thm]{Corollary}

\newtheorem{conj}[thm]{Conjecture}

\newtheorem{rem}[thm]{Remark}

\newtheorem*{acknow}{Acknowledgments}
   
 \makeatletter

\def\<{\langle}
\def\>{\rangle}
\def\({\left(}
\def\){\right)}
\def\[{\left[}
\def\]{\right]}
\def\tr{\mathop{\text{tr}}}

\allowdisplaybreaks%????????
\makeatother
%Title -------------------------------------------------------------------------------

\title[A first eigenvalue estimate for embedded hypersurfaces]{A first eigenvalue estimate for embedded hypersurfaces in positive Ricci curvature manifolds}

\author[F.G. Li]{Fagui Li$^{}$}
\address{Frontier Interdisciplinary Domain, Beijing Institute of Technology, Zhuhai, Guangdong 519088, P. R. CHINA.}
\email{lifagui@bitzh.edu.cn}

\author[J. R. Yan]{Junrong Yan}
\address{%\color{red}
Department of Mathematics, Northeastern University, Boston, MA 02115, USA.}
\email{j.yan@northeastern.edu}

\subjclass[2010]{58C40, 58J50.}
\date{}
\keywords{First eigenvalue of the Laplacian,  Yau's conjecture, non-minimal hypersurface, positive Ricci curvature manifold.}
%\thanks{The second author is the corresponding author.}
%\thanks{F. G. Li is partially supported by  NSFC (No. 12171037, 12271040).}
%\thanks{J. R. Yan is partially supported by  China Postdoctoral Science Foundation (2023M730092) and Boya Postdoctoral Fellowship at Peking University.}
\begin{document}
\maketitle

% % % % % % % % % %
\begin{abstract}
Let $\Sigma$ be a closed, embedded, oriented  hypersurface in a closed oriented   Riemannian
manifold $N$. Under a lower bound on the Ricci curvature and an upper
bound on the sectional curvature of $N$, we establish a lower bound for the
first nonzero eigenvalue of the Laplacian on $\Sigma$. The estimate depends on
the ambient curvature bounds, the normal injectivity radius, and the geometry of $\Sigma$
through its mean curvature and second fundamental form. This result extends the
classical eigenvalue estimate of Choi and Wang [J. Diff. Geom. \textbf{18} (1983), 559--562.] to the non-minimal case.

\end{abstract}

\section{Introduction}
The well-known Yau's Conjecture \cite{Schoen Richard and S. T. Yau 1994,Yau  1982} asserts that 
 \begin{conj}[\textbf{Yau's Conjecture \cite{Schoen Richard and S. T. Yau 1994,Yau  1982}}] \label{conj  Yau 1982}
If $ \Sigma$ is a closed   embedded minimal hypersurface of the unit sphere  $\mathbb{S}^{n+1}$, then the first nonzero eigenvalue of the Laplacian on $\Sigma$, denoted by $\lambda_1 (\Sigma)$, is  equal to $n$.
 
  \end{conj}
 In 1983, %In particular,
 Choi and Wang \cite{Choi Wang 1983} showed that $\lambda_1(\Sigma)\geq n/2$ and a careful argument (see \cite[ Theorem 5.1]{Brendle S 2013 survey of recent results}) implies that the strict inequality holds, i.e., $\lambda_1 (\Sigma)> n/2$. 
More precisely, Choi-Wang \cite{Choi Wang 1983} proved
\begin{thm}[\textbf{Choi-Wang} \cite{Choi Wang 1983}]\label{CW} 
Let $\Sigma$ be  a
closed, embedded, oriented minimal hypersurface  in a closed oriented  Riemannian manifold $N$ of dimension $n+1$. If the Ricci curvature of $N^{}$
is bounded from below by a positive constant $k>0$, then the first nonzero eigenvalue of the Laplacian on $\Sigma$, denoted by $\lambda_1(\Sigma)$,  has a lower bound given by  $\frac {k}2.$
  \end{thm}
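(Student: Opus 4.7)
My approach is to transplant the Reilly-formula argument of Choi--Wang from the interior region bounded by $\Sigma$ onto a one-sided tubular neighborhood of $\Sigma$ whose thickness is controlled by the rolling-radius hypothesis, while retaining the mean-curvature contribution that Choi--Wang were free to discard. Concretely, I would work on $\Omega_r = \{\exp^\perp_p(t\nu(p)) : p\in\Sigma,\ 0 \le t \le r\}$, where $\nu$ is the globally defined unit normal to the compact, embedded, oriented $\Sigma$. Under the sectional-curvature bound $K$ and the pointwise bound $|h|^2 \le S_\Sigma$, the Rauch/Riccati comparison for the shape operator of parallel hypersurfaces $\Sigma_t$ shows that the normal exponential map is a diffeomorphism on $[0,r]\times\Sigma$ as long as $r \le t_R$; this is exactly what produces the $\arctan$ formula defining the rolling radius $R$ and forces the truncation $r=\min(t_R,1)$.

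Let $z$ be a first non-trivial eigenfunction on $\Sigma$, so $\Delta_\Sigma z = -\lambda_1 z$. I would extend it to $\Omega_r$ by a product ansatz $f(\exp^\perp_p(t\nu))=\phi(t)\,z(p)$ with a radial profile $\phi\in C^2([0,r])$ normalized by $\phi(0)=1$ and a boundary condition at $r$ chosen to kill as many $\Sigma_r$ Reilly terms as possible (Dirichlet $\phi(r)=0$ or Neumann $\phi'(r)=0$, whichever proves more efficient). The Laplacian then splits as $\Delta f = \phi''(t)\,z + \phi'(t)\,H(\Sigma_t)\,z + \phi(t)\,\Delta_{\Sigma_t} z$, where both $H(\Sigma_t)$ and the induced Laplacian $\Delta_{\Sigma_t}$ are controlled along the flow by the Riccati ODE for the shape operator together with the sectional-curvature bound.

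Reilly's identity on $\Omega_r$ reads
$$\int_{\Omega_r}\bigl[(\Delta f)^2 - |\nabla^2 f|^2 - \mathrm{Ric}(\nabla f,\nabla f)\bigr]\,dV = \int_{\partial\Omega_r}\bigl[2u\,\Delta_\partial(f|_\partial) + H_\partial u^2 + h_\partial(\nabla_\partial f,\nabla_\partial f)\bigr]\,dA.$$
Applying this identity with the product ansatz, substituting $\Delta_\Sigma z=-\lambda_1 z$, invoking $\mathrm{Ric}\ge k$ in the interior, and using the trace-free Hessian inequality $|\nabla^2 f|^2 \ge (\Delta f)^2/(n+1)$---equivalently $(\Delta f)^2 - |\nabla^2 f|^2 \le \tfrac{n}{n+1}(\Delta f)^2$, which is the source of the $\tfrac{n}{n+1}H_\Sigma$ factor---one rearranges the resulting inequality to isolate $\lambda_1$. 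The pointwise bounds $|H|\le H_\Sigma$ and $|h|^2\le S_\Sigma$, combined with Riccati comparison along the normal flow, replace all geometric quantities by explicit constants and by the profile $\phi$; optimizing $\phi$ on $[0,r]$ against the resulting quadratic functional yields the explicit constant $\mathcal{C}(r)$ and the stated lower bound on $\lambda_1$.

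The principal obstacle is the coordinated control of three competing contributions: the possibly signed $h(\nabla_\Sigma z,\nabla_\Sigma z)$ boundary term on $\Sigma$, the residual $\Sigma_r$ boundary contribution (even after the $\phi$-boundary condition is imposed, a single unavoidable term typically survives), and the negative interior contribution $-\tfrac{n}{n+1}\int_{\Omega_r}(\Delta f)^2$ coming from the trace-free Hessian inequality. Balancing these against the positive gains from $\mathrm{Ric}\ge k$ and the $Hu^2$ boundary term reduces to a one-dimensional variational problem for $\phi$ whose solution, via elementary ODE arguments with Rauch-comparison coefficients, produces exactly the constant $\mathcal{C}(r)$ announced in the theorem. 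The degeneration $H_\Sigma=0$ should collapse the estimate back to the Choi--Wang inequality $\lambda_1\ge k/2$, providing a useful consistency check.
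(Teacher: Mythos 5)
Your proposal does not prove the statement at hand and contains genuine gaps. First, note that the statement is the Choi--Wang theorem for \emph{minimal} hypersurfaces: the hypotheses involve only the Ricci lower bound $k$, and neither a sectional curvature bound, nor the rolling radius, nor $S_\Sigma$ appears. You have instead sketched an attack on the paper's non-minimal main theorem, and even for that theorem your route differs from the paper's (which keeps the harmonic extension on the full domain and handles the mean-curvature term with a cutoff in $\rho$ plus Kasue's Laplacian comparison, rather than working on a truncated collar with a product ansatz).

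The two essential ideas of the Choi--Wang argument are missing from your sketch. (i) \emph{Harmonic extension on a region bounded by $\Sigma$.} Because $\mathrm{Ric}>0$ forces $b_1(N)=0$, the embedded oriented $\Sigma$ separates $N$ into $\Omega_1,\Omega_2$ with $\partial\Omega_i=\Sigma$; one solves $\Delta f=0$ in $\Omega$, $f=u$ on $\Sigma$. Harmonicity is what makes $(\Delta f)^2-|{\rm Hess}\,f|^2=-|{\rm Hess}\,f|^2\le 0$ on the left of Reilly's formula and what yields the identity $\int_\Sigma f_\nu\Delta_\Sigma f=-\lambda_1\int_\Omega|\nabla f|^2$ on the right; together with $H\equiv 0$ these give $2\lambda_1\ge k$ immediately. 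Your product ansatz $f=\phi(t)z$ is not harmonic, so you lose both the sign of the Hessian term (your trace inequality $|{\rm Hess}\,f|^2\ge(\Delta f)^2/(n+1)$ points the wrong way for closing the estimate and leaves an unabsorbed positive $(\Delta f)^2$ contribution) and the clean eigenvalue identity; moreover the inner boundary $\Sigma_r$ of your collar produces Reilly terms that you concede you cannot eliminate. (ii) \emph{The sign of the second fundamental form term.} The term $\int_\Sigma h(\nabla_\Sigma u,\nabla_\Sigma u)$ changes sign when one switches from $\Omega_1$ to $\Omega_2$, so one simply chooses the side on which it is nonnegative and discards it. You list this term as an unresolved ``obstacle,'' but without the separation argument and the choice of side the proof cannot close. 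In short: replace the collar and product ansatz by the harmonic Dirichlet extension on the region $\Sigma$ bounds, and the theorem follows in a few lines from Reilly's formula.
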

Later, Choi and Schoen \cite{H I Choi and R Schoen 1985} were able to remove the orientability assumption of Theorem \ref{CW}.
   In addition,
    Tang and Yan \cite{TXY14,TY13} proved   Yau's Conjecture in the isoparametric case.
    Choe and Soret \cite{J Choe and M Soret  2009} were  able to verify the Yau's Conjecture for the Lawson surfaces and the Karcher-Pinkall-Sterling examples. Let $M$ be a three-dimensional manifold with positive Ricci curvature. Suppose there is no embedded minimal surface in $M$ which is the multiplicity 2 limit of a sequence of  constant mean curvature  (CMC) surfaces.
Recently,
   Sun \cite{Sun Ao 2020 Pacific J M} gave a lower bound for the first eigenvalue of CMC surfaces in $M^3$.
  For the minimal surface with genus $g$  in the unit sphere $\mathbb{S}^{3}$, 
Zhao \cite{Zhao Yuhang 2023}  proved that $\lambda_1\geq 1+\epsilon$, where $\epsilon$ is a positive constant depending only on $g$. For the minimal hypersurface in  $\mathbb{S}^{n+1}$ $(n\geq 2)$,  Duncan-Sire-Spruck \cite{Duncan  Sire and  Spruck IMRN 2024} and 
 Jim$\rm \acute{e}$nez-Chinchay-Zhou \cite{Jimenez Chinchay Zhou 2024} proved that $\lambda_1\geq n/2+\epsilon$, where $\epsilon$ is a
  specific function of $n$ and the squared length of the second fundamental form   of $\Sigma$, respectively.
Furthermore, Zhu
        \cite{Zhu Jonathan J 2017} provided a counterexample to Conjecture \ref{conj  Yau 1982} in the class of manifolds with boundary, and it implies that any resolution of Conjecture \ref{conj  Yau 1982}  would likely need to consider more geometric data than a Ricci curvature lower bound.
   For more details,  please see the elegant surveys by Choe \cite{J. Choe 2006} and Brendle \cite{Brendle S 2013 survey of recent results}.  
  
In this paper, we
%mainly deal with
focus on the non-minimal version 
of Theorem \ref{CW}.
Let
  $H=\tr_{g_{\Sigma}} h$ and
$S=|h|^2$ be  the mean curvature and the  squared length of the second fundamental form $h$ of $\Sigma$, respectively. Suppose
$$|H|_{\max}=\max_{\Sigma}|H|,\quad
S_{\max}=\max_{\Sigma}S,
$$
%$$
%\mathcal{C}=\sup_{ 0< t< \frac{1}{\sqrt{K}}   \arctan(\sqrt{K/ {S_{\max}}})} - \frac{nK\tan(\sqrt{K}t)+n\sqrt{S_{\max} K}}{\sqrt{K}-\sqrt{S_{\max}}\tan(\sqrt{K}t)}-\frac{1}{t},
%$$
%Choosing $\rho_a=\frac{1}{\sqrt{K}}
%  \arctan(t^{}\sqrt{K/ {S_{\max}}})$ $(0<t<1)$, one has
and 
 \begin{equation}\label{equation C constant}
 \mathcal{C}(r):=\sup_{t\in \left(0,r \right) } 
 \left[ -
  \frac{ntK+nS_{\max}}{(1-t)\sqrt{S_{\max}}}-\frac{\sqrt{K}}{\arctan(t^{}\sqrt{K/ {S_{\max}}})}\right] .
 \end{equation}
 One has
\begin{thm}\label{thm eigenvaule mean curvature}
Let $\Sigma$ be a closed, embedded, oriented %, non--totally geodesic
hypersurface in a closed oriented Riemannian manifold $N$ of dimension $n+1$.
Assume that the Ricci curvature of $N$ is bounded below by a positive constant $k>0$ and the sectional curvature of $N$ is bounded above by a positive constant $K>0$. 
Let $R>0$ denote the normal injectivity radius  of $\Sigma$ in $N$ (see
\eqref{equation rolling radius}).  Let $r=\min\{t_R,1\}$ and
$t_R=\sqrt{\frac{S_{\max}}{K}}\tan(R\sqrt{K})$.
%, where $t_R>0$ is defined by
%$$
%R=\frac{1}{\sqrt{K}}\arctan\!\bigl(t_R\sqrt{K/S_{\max}}\bigr).
%$$
Then the first nonzero eigenvalue of the Laplacian on $\Sigma$ satisfies
$$
\lambda_1(\Sigma)\ge
\frac{k}{2}
+\frac{|H|_{\max}}{2}
\left(\mathcal{C}(r)-\frac{n}{n+1}|H|_{\max}\right).
$$
\end{thm}
  \begin{rem}
If the mean curvature $H$ of $\Sigma$ is zero, the  lower bound of the first nonzero eigenvalue coincides with Theorem \ref{CW}. Thus, Theorem \ref{thm eigenvaule mean curvature} \footnote{
Theorem \ref{thm eigenvaule mean curvature} overlaps with that of \cite{Pak}.
We note that a key step (see Case B on page 275)  
 in \cite{Pak} implicitly requires additional assumptions, which are not stated explicitly.
The argument presented in this paper provides a complete and self-contained proof under the stated hypotheses. We thank P. T. Ho, the author of \cite{Pak}, for helpful correspondence on this point.
%Regarding the non-minimal case, a corresponding result can be found in  \cite{Pak}. However, it is regrettable that the proof contains an error, specifically in Case B on page 275. We sincerely appreciate the P. T. Ho, the author of \cite{Pak},  for communicating with us and confirming this point.
}  extends the result of  Theorem \ref{CW}  to non-minimal hypersurfaces.% In fact, the result will be better if the mean curvature is small enough (see Corollary \ref{cor eigenvaule mean curvature}).
  \end{rem}
%\begin{rem}
%A similar result appears in \cite{Pak}. However, it is regrettable that the proof contains an error, specifically in Case B on page 275.  We sincerely appreciate the author for communicating with us and confirming this point.
%    Our result overlaps with that of \cite{Pak}.
%We note that a key step \textcolor{red}{(JY: Could you please identify which step it is?)} in \cite{Pak} implicitly requires additional assumptions, which are not stated explicitly.
%The argument presented here provides a complete and self-contained proof under the stated hypotheses. We thank P. T. Ho, the author of \cite{Pak}, for helpful correspondence on this point.
%\end{rem}
    Due to the Cauchy inequality, we have $nS_{\max}\geq |H|_{\max}^2$. Hence
\begin{cor}
 Under the conditions of  Theorem \ref{thm eigenvaule mean curvature}, we have
 $$
 \lambda_1(\Sigma)\ge
 \frac{k}{2}
 +\frac{nS_{\max}}{2}
 \left(\mathcal{C}(r)-\frac{n^2}{n+1}S_{\max}\right).
 $$
  %In particular,
%   $$\lambda_1(\Sigma)\geq
%   \frac {k}{2}-
%   \frac{\sqrt{nS_{\max}}}{2}
%      \left(
%      nK+\frac{2\sqrt{S_{\max}K}}{\arctan(\sqrt{K})}+2n\sqrt{S_{\max}} +\frac{n^{\frac{3}{2}}}{n+1}\sqrt{S_{\max}}
 %     \right).$$
\end{cor}
%In fact, Theorem~\ref{thm eigenvaule mean curvature} follows from the more refined estimate stated below.

%By Theorem \ref{cor eigenvaule mean curvature}, we have the following corollaries. 
For the unit sphere $\mathbb{S}^{n+1}$, we have the following corollary by Theorem \ref{thm eigenvaule mean curvature}.  %one has

   \begin{cor}\label{cor sphere and volume}
    Let $\Sigma$  be a  closed  embedded mean-convex hypersurface in $\mathbb{S}^{n+1}$ with volume ${\rm Vol}(\Sigma)$.
    % Let    $r=\min\{\sqrt{ {S_{\max}}{}}\tan(R),1\}$     and
%    $ \mathcal{C}(r)=\sup_{t\in \left(0,r \right) } 
%        \left[ -
%         \frac{nt+nS_{\max}}{(1-t)\sqrt{S_{\max}}}-\frac{1}{\arctan(t^{}\sqrt{1/ {S_{\max}}})}\right] $.  
\begin{itemize}
 \item[(i)] The first nonzero eigenvalue of the Laplacian on $\Sigma$ satisfies
    $$
   \lambda_1(\Sigma)\geq
    \frac {n}{2}-
    \frac{|H|_{\max}}{2}
       \left(
       n+\frac{8+2n\pi}{\pi}\sqrt{S_{\max}}
       %+2n\sqrt{S_{\max}}
        +\frac{n}{n+1}|H|_{\max}
       \right).
  $$
 \item[(ii)]
    If ${\rm Vol}(\Sigma)\geq {\rm Vol}(\mathbb{S}^{n})$, then 
  $$
  S_{\max}\geq
  \frac{ \int_{\Sigma}S}{{\rm Vol}(\Sigma)}
   \geq
 \left[ 
   \frac {n}{2}-
   \frac{|H|_{\max}}{2}
      \left(
      n+\frac{8+2n\pi}{\pi}\sqrt{S_{\max}}
      %+2n\sqrt{S_{\max}}
       +\frac{n}{n+1}|H|_{\max}
      \right) \right] 
  \left[ 1- \frac{{\rm Vol}^2(\mathbb{S}^{n})}{{\rm Vol}^2(\Sigma)}\right] ,  $$
    where the equality holds if and only if $\Sigma$ is totally geodesic.
% \item[(iii)]      If ${\rm Vol}(\Sigma^n)\geq\left(1+\delta \right)  {\rm Vol}(\mathbb{S}^{n})$ $\left( \delta> 0\right)$, then   there is a positive constant $\eta$, depending only on $n$ and $\delta$, such that $ S_{\max}\geq\eta > 0$.
\end{itemize}
%  In particular, we have
%    $$
%S_{\max}
%    \geq
%  \left[ \frac {n}2+
%    \frac{|H|_{\max}}{2}
%       \left(\mathcal{C} -\frac{n}{n+1}|H|_{\max}\right) \right] 
%    \frac{{\rm Vol}^2(M^n)-{\rm Vol}^2(\mathbb{S}^{n})}{{\rm Vol}^2(M^n)}.
%    $$
     \end{cor}

\begin{rem}
If $\Sigma$ is a closed, immersed, non-totally geodesic, minimal hypersurface in  $\mathbb{S}^{n+1}$, % $\Sigma\looparrowright    \mathbb{S}^{n+1}$ is minimal and non-totally geodesic, 
then $ S_{\max}\geq n$ by  Simons' inequality \cite{Simons68}.  
Corollary \ref{cor sphere and volume} (ii)
can be compared with the Simons-type 
inequalities and pinching theorems
(cf.  \cite{Chern do Carmo Kobayashi 1970,M Scherfner S Weiss and  Yau 2012,Simons68}, etc.).
%Simons' inequality \cite{Simons68} shows that  $ S_{\max}\geq n$.
%Besides,  Cheng-Li-Yau \cite{Cheng Li Yau 1984 Heat equations}proved  that  there is  a positive constant $\delta$, depending only on $n$, such that %the area  of  $M^n$ satisfies
%   $
%   {\rm Vol}(\Sigma)\geq \left( 1+\delta\right) {\rm Vol}(\mathbb{S}^n).
%   $
%   Hence,   Corollary \ref{cor sphere and volume} (ii) also gives a weak positive lower bound for $S_{\max}$, i.e., $S_{\max}\geq\eta (n,\delta) > 0$.
%   Hence,   there is a positive constant $\eta$, depending only on $\delta$, such that $ S_{\max}\geq\eta > 0$ by Corollary \ref{cor sphere and volume} (ii). 
\end{rem}

%\begin{rem}
%Suppose $\Sigma$ is a   closed and embedded mean-convex hypersurface in   Duncan-Sire-Spruck (cf. Proposition 2.1 in \cite{Duncan  Sire and  Spruck IMRN 2024}) proved that the normal injectivity radius of $\Sigma$ satisfied $R > \arctan(\sqrt{1/ {S_{\max}}})-\varepsilon$ for any   positive number $\varepsilon$.
%\end{rem}
\section{Proof of the theorems}
\subsection*{Notational conventions}%*{Notations}
Let $(\Omega, g)$ be an $(n+1)$-dimensional ($n \geq 1$) compact Riemannian manifold with smooth boundary $\Sigma = \partial\Omega$, and let $g_{\Sigma}$ be the induced metric on $\Sigma$. We assume that the normal bundle of $\Sigma$ is trivial.
We use $\langle\cdot, \cdot\rangle$ to denote the inner product with respect to both $g$ and $g_{\Sigma}$ when no confusion occurs. We denote by $\nabla, \Delta$, ${\rm Hess}$ and $\mathrm{Ric}$ the gradient, the Laplacian, the Hessian and  the Ricci curvature tensor  on $\Omega$ respectively, while by $\nabla_{\Sigma}$ and $\Delta_{\Sigma}$ the gradient and the Laplacian on $\Sigma$ respectively. Let $\nu$ be the unit outward normal of $\Sigma$ and $f_\nu=\langle\nabla f, \nu\rangle$. 
We denote by $h(X, Y)=$ $g\left(\nabla_X \nu_, Y\right)$ and $H=\operatorname{tr}_{g_{\Sigma}} h$ the second fundamental form and the mean curvature of $\Sigma$
respectively. 

   Recall that the cut locus ${\rm Cut}(\Sigma)$ is defined to be the set of all cut points. A cut point is the first point on a normal geodesic initiating from the boundary $\Sigma$ at
    which this geodesic fails to minimize  uniquely for the distance function $\rho$. In other words, for $x\in \Sigma$ and  the arc-length parametrized geodesic 
     $\gamma_x(t)=\exp_x(-t\nu(x))$ 
     ($t\geq 0$), then
 $\gamma_x(t_0)\in {\rm Cut}(\Sigma)$ for
 \begin{equation*}
 t_0=t_0(x)=\sup\{t>0:\mathrm{dist}(\gamma_x(t),\Sigma)=t\}.
 \end{equation*}
 %{\color{blue} {\bf To be removed.} If $t_0(x)$ is finite, then $\exp_x(-t_0(x)\nu(x))$ is called a cut point of the boundary $\S$.}
 The set ${\rm Cut}(\Sigma)$ is known to have finite $(n-1)$-dimensional Hausdorff measure (cf. \cite{IT01, Li Yanyan and Louis Nirenberg 2005 CPAM}, etc.). 
   Define the distance function to the boundary $\Sigma$ by
  \begin{equation*}
  \rho=\rho_{\Sigma}(x)={\rm dist}(x,  \Sigma)=
  \inf_{y\in \Sigma}{\rm dist}(x,  y).
  \end{equation*}
  The distance function $\rho$ is smooth away from the cut locus $\mathrm{Cut}(\Sigma)$ of $\Sigma$. The distance between $\Sigma$ and $\mathrm{Cut}(\Sigma)$ is called the normal injectivity radius (also referred to as the rolling radius in \cite{Colbois Girouar  Hassannezhad jfa 2020}) of $\Sigma$ in $\Omega$:
\begin{equation}\label{equation rolling radius}
 %   {\rm NInj}
R_\Sigma (\Omega) = {\rm dist}(\Sigma,{\rm Cut} ({\Sigma})).
  \end{equation}
 Given
 $ h \in  (0,  %{\rm NInj}
 R_\Sigma (\Omega) ]$, 
 define the tubular neighbourhood 
 $\Omega_h =\{x\in \Omega:\rho_{\Sigma}(x)<h\}.$
    For each $x \in  \Omega_h$,
     there is exactly one nearest point to $x$ in $\Sigma$ and the exponential map $\gamma_x(t)=\exp_x(-t\nu(x))$ 
      defines a diffeomorphism between 
      $[0, h) \times \Sigma$
       and the tubular neighbourhood  $\Omega_h$.
     Moreover, for each $s \in [0, %{\rm NInj}
 R_\Sigma (\Omega) )$, the map $\gamma(s) : \Sigma  \rightarrow \Sigma_s$ is a diffeomorphism.

In this paper, we mainly use the following  formula by Reilly \cite{Reilly 1977}. For some generalizations of Reilly's formula and its applications, please refer to \cite{Li and Xia JDG 2019,Qiu and Xia IMRN 2015}, etc.
%We recall  Reilly's formula \cite{Reilly 1977}.
\begin{prop}[{Reilly's formula} \cite{Reilly 1977}]
Let $(\Omega,g)$ be an $(n+1)$-dimensional smooth compact connected Riemannian manifold with boundary $\Sigma$.
For a smooth function $f$  on ${\Omega}$, we have
$$
\begin{aligned}
& \int_{\Omega} (\Delta f)^2-\left|{\rm Hess} f\right|^2-\mathrm{Ric}\left(\nabla f,\nabla f \right)  \\
= & \int_{\Sigma} 2 f_\nu  \Delta_{\Sigma} f+H\left( f_\nu\right)^2+h\left(\nabla_{\Sigma} f, \nabla_{\Sigma} f\right). \\
\end{aligned}
$$
%where $\Delta$ and $\n$ are the Laplacian operator and the Levi-Civita connection of the Riemannian metric $g$ respectively, $\Delta_\S$ and $\nabla_\S$ the corresponding ones of the induced Riemannian metric $g_\S$ respectively,  $H$ and $h$ are the mean curvature and the second fundamental form of the boundary respectively, $\mathrm{Ric}_g$ is the Ricci curvature tensor of the Riemannian metric $g$, and $\nu$ is the unit outer normal along the boundary.
\end{prop}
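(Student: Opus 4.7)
The plan is to derive Reilly's formula by integrating the Bochner identity over $\Omega$ and reducing the resulting boundary term to the stated form via the tangential/normal decomposition on $\Sigma$.

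First I would start from the pointwise Bochner formula
$$\tfrac{1}{2}\Delta|\nabla f|^2 = |{\rm Hess}\, f|^2 + \langle \nabla f, \nabla \Delta f\rangle + \mathrm{Ric}(\nabla f, \nabla f)$$
on $\Omega$, integrate it over $\Omega$, and apply the divergence theorem on both sides. On the left, $\tfrac{1}{2}\partial_\nu|\nabla f|^2 = {\rm Hess}\, f(\nabla f, \nu)$ produces $\int_\Sigma {\rm Hess}\, f(\nabla f, \nu)$, while $\int_\Omega \langle\nabla f,\nabla\Delta f\rangle$ integrates by parts to $\int_\Sigma f_\nu\, \Delta f - \int_\Omega (\Delta f)^2$. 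Rearranging yields the intermediate identity
$$\int_\Omega \bigl[(\Delta f)^2 - |{\rm Hess}\, f|^2 - \mathrm{Ric}(\nabla f,\nabla f)\bigr] = \int_\Sigma \bigl[f_\nu\,\Delta f - {\rm Hess}\, f(\nabla f,\nu)\bigr],$$
whose left-hand side is already in the target form.

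Next I would process the boundary integrand. Using the orthogonal decomposition $\nabla f = \nabla_\Sigma f + f_\nu\nu$ on $\Sigma$, together with the Gauss-type identity
$$\Delta f\big|_\Sigma = \Delta_\Sigma f + H f_\nu + {\rm Hess}\, f(\nu,\nu),$$
obtained by tracing ${\rm Hess}\, f$ in a frame adapted to $\Sigma$, I would eliminate the ${\rm Hess}\, f(\nu,\nu)$ contribution and reduce the integrand to $f_\nu\,\Delta_\Sigma f + H f_\nu^2 - {\rm Hess}\, f(\nabla_\Sigma f, \nu)$. Expanding the remaining mixed term and using $\langle \nabla_X Y,\nu\rangle = -h(X,Y)$ for tangential $X,Y$, an immediate consequence of the paper's convention $h(X,Y)=g(\nabla_X\nu,Y)$, produces ${\rm Hess}\, f(\nabla_\Sigma f,\nu) = (\nabla_\Sigma f)(f_\nu) - h(\nabla_\Sigma f,\nabla_\Sigma f)$.

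Substituting, the boundary integrand becomes $f_\nu\,\Delta_\Sigma f + H f_\nu^2 + h(\nabla_\Sigma f,\nabla_\Sigma f) - (\nabla_\Sigma f)(f_\nu)$. Since $\Sigma$ is closed, a tangential integration by parts gives $-\int_\Sigma (\nabla_\Sigma f)(f_\nu) = \int_\Sigma f_\nu\,\Delta_\Sigma f$, which merges with the existing $f_\nu\,\Delta_\Sigma f$ to produce the coefficient $2$ in front of $f_\nu\,\Delta_\Sigma f$, completing the derivation. The only real obstacle is bookkeeping: the identity ${\rm Hess}\, f(\nabla_\Sigma f,\nu) = (\nabla_\Sigma f)(f_\nu) - h(\nabla_\Sigma f,\nabla_\Sigma f)$ has signs dictated by the convention $h(X,Y)=g(\nabla_X\nu,Y)$ and the outward orientation of $\nu$, and once these are fixed every step reduces to a standard divergence theorem, either on $\Omega$ or tangentially on the closed submanifold $\Sigma$.
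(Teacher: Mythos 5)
Your derivation is correct and is the standard proof of Reilly's formula; the paper does not re-prove it but simply cites Reilly (1977), where essentially this same argument — integrate the Bochner identity over $\Omega$, apply the divergence theorem, and reduce the boundary term using the adapted frame decomposition of ${\rm Hess}\,f$ and a tangential integration by parts on the closed boundary — appears. All the sign bookkeeping you flag is handled consistently with the paper's convention $h(X,Y)=g(\nabla_X\nu,Y)$ and outward $\nu$, and the identities $\Delta f|_\Sigma=\Delta_\Sigma f+Hf_\nu+{\rm Hess}\,f(\nu,\nu)$ and ${\rm Hess}\,f(\nabla_\Sigma f,\nu)=(\nabla_\Sigma f)(f_\nu)-h(\nabla_\Sigma f,\nabla_\Sigma f)$ check out under that convention.
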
   
   In the subsequent proof, we mainly need the following theorem by Kasue \cite{Atsushi Kasue 1984,Atsushi Kasue 1982 JapanMath}.

 \begin{thm}[Theorem 0.3 in \cite{Atsushi Kasue 1984}]\label{Thm Atsushi Kasue 1982 84}
Let $M$ be a closed submanifold of a Riemannian manifold $N$  and $x$ a point of $N_0\backslash M$, where $N_0$ is the interior of $N$ ($N_0=N$ if $\partial N=\emptyset$).
%the boundary of $N$ is empty). 
Suppose there exists a distance minimizing geodesic $\sigma:[0, a] \rightarrow N$ from $M$ through $x=\sigma\left(a^{\prime}\right)\left(a^{\prime}<a\right)$, (so that $\rho_M={\rm dist}(x,  M)$ is smooth near $x$). Let $\mathscr{K}$ be a continuous function on $\left[0, a^{\prime}\right]$ such that the sectional curvature of any tangent plane containing $\dot{\sigma}(t)$ is bounded from above by $\mathscr{K}(t)$ $\left(t \in\left[0, a^{\prime}\right]\right)$; in the case when $\operatorname{dim} M>0$, let $\Gamma$ be a real number such that all the eigenvalues of the second fundamental form $S_{\dot{\sigma}(0)}$ of $M$ are bounded from below by $\Gamma$. 
Let $h_{\mathscr{K}, \Gamma}$ and $f_\mathscr{K}$ be the solution of 
(\ref{equation h-mathscr-Gamma}) and 
(\ref{equation f-mathscr}), respectively.
%Let $h_{\mathscr{K}, \Gamma}$ (resp. $f_\mathscr{K}$) be the solution of equation (0.4) defined by $\mathscr{K}$ and $\Gamma$ (resp. the solution of equation (0.2) defined by $\mathscr{K}$). 
\begin{equation}\label{equation h-mathscr-Gamma}
h''_{\mathscr{K}, \Gamma}(t)+
\mathscr{K}(t)h_{\mathscr{K}, \Gamma}(t)=0,
\quad h_{\mathscr{K}, \Gamma}(0)=1\ and \  
h'_{\mathscr{K}, \Gamma}(0)=\Gamma.
\end{equation}
\begin{equation}\label{equation f-mathscr}
f''_\mathscr{K}(t)+\mathscr{K}(t)f_\mathscr{K}(t)=0,
\quad f_\mathscr{K}(0)=0\ and\  f'_\mathscr{K}(0)=1.
\end{equation}
Suppose $h_{\mathscr{K}, \Gamma}$ is positive on $\left[0, a^{\prime}\right]$. Then the Hessian  of $\rho_M$ has an estimate:
 $$
\left({\rm Hess} \rho_M\right)_x(V, V) \geqq\left(\log h_{\mathscr{K}, \Gamma}\right)^{\prime}\left(a^{\prime}\right)\left\{\|V\|^2-\left\langle\dot{\sigma}\left(a^{\prime}\right), V\right\rangle^2\right\}
 $$
for any $V \in T_xN$, and in addition, if +%\textcolor{red}{
$V \in d\left(\exp _M\right)_{\dot{\sigma}(a^{\prime})}(\Pi)$,%JY: should be $\exp_M$?},
 $$
\left({\rm Hess} \rho_M\right)_x(V, V) \geqq\left(\log f_\mathscr{K}\right)^{\prime}\left(a^{\prime}\right)\left\{\|V\|^2-\left\langle\dot{\sigma}\left(a^{\prime}\right), V\right\rangle^2\right\}
 $$
 where $\Pi$ denotes the vertical subspace in the tangent space at $\dot{\sigma}(a^{\prime})$ of the normal bundle $\nu(M)$ for $M$. (We take $d\left(\exp _M\right)_{\dot{\sigma}(a^{\prime})}(\Pi)=T_xN$ if $\operatorname{dim} M=0$.) In particular,
 $$
 \Delta \rho_M(x) \geqq\left\{\dim(M)\left(\log h_{\mathscr{K}, \Gamma}\right)^{\prime}+\big(\dim(N)-\dim(M)-1\big)\left(\log f_\mathscr{K}\right)^{\prime}\right\}\left(a^{\prime}\right)
 $$
 \end{thm}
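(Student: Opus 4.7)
The plan is to reduce each pointwise Hessian bound to an index form comparison along the unit speed geodesic $\sigma$, and then to compare the resulting integral with the one-dimensional model ODEs (\ref{equation h-mathscr-Gamma}) and (\ref{equation f-mathscr}). Since $\rho_M$ is smooth near $x$ with $\|\nabla \rho_M\|=1$, the radial direction $\dot\sigma(a')$ lies in the kernel of $(\mathrm{Hess}\,\rho_M)_x$, so it suffices to prove the Hessian estimate for tangent vectors $V$ orthogonal to $\dot\sigma(a')$.

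First I would fix such a $V$ and build a one-parameter family $\{\sigma_s\}_{|s|<\varepsilon}$ of unit speed geodesics meeting $M$ orthogonally, with $\sigma_0=\sigma$ and $\partial_s \sigma_s(a')|_{s=0}=V$. Writing $L(s)$ for the length of $\sigma_s|_{[0,a']}$, the first variation formula and $\sigma\perp M$ give $L'(0)=0$; because $\rho_M(\sigma_s(a'))\le L(s)$ with equality at $s=0$, one obtains
\begin{equation*}
(\mathrm{Hess}\,\rho_M)_x(V,V)=L''(0)=I_M(Y,Y),
\end{equation*}
where $Y=\partial_s\sigma_s$ is the variation field and $I_M$ is the index form on vector fields along $\sigma$ together with an extra boundary term $-\langle S_{\dot\sigma(0)}Y(0),Y(0)\rangle$ at $t=0$ encoding the second fundamental form of $M$. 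When $\dim M=0$, or when $V\in d(\exp_N)_{\dot\sigma(a')}(\Pi)$, one may choose the variation so that $Y(0)=0$ and this boundary term disappears.

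The key step is the standard minimization property of the index form: among vector fields along $\sigma$ with endpoint datum $Y(a')=V$ and the prescribed $M$-admissible boundary condition at $0$, $I_M$ is minimized by the unique $M$-Jacobi field $J$ with $J''+R(J,\dot\sigma)\dot\sigma=0$, $J(0)\in T_{\sigma(0)}M$, $(J'(0))^\top=-S_{\dot\sigma(0)}(J(0))$, and $J(a')=V$, so it suffices to bound $I_M(J,J)$ from below. I would then plug in the ansatz $W(t)=\phi(t)E(t)$ where $E$ is the parallel transport of $V/\|V\|$ along $\sigma$ and $\phi$ equals $h_{\mathscr{K},\Gamma}/h_{\mathscr{K},\Gamma}(a')$ in the horizontal case and $f_\mathscr{K}/f_\mathscr{K}(a')$ in the vertical case. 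Using $\mathrm{Sec}(\dot\sigma,\cdot)\le\mathscr{K}(t)$ and integrating by parts, the curvature term in $I_M(W,W)$ is controlled above by $\int_0^{a'}\mathscr{K}\phi^2\,dt$, and (\ref{equation h-mathscr-Gamma}), (\ref{equation f-mathscr}) collapse what remains to the boundary contribution $(\log h_{\mathscr{K},\Gamma})'(a')\|V\|^2$ (respectively $(\log f_\mathscr{K})'(a')\|V\|^2$); in the horizontal case the initial condition $h'_{\mathscr{K},\Gamma}(0)=\Gamma$ precisely matches the boundary term $-\langle S_{\dot\sigma(0)}W(0),W(0)\rangle$ when the eigenvalues of $S_{\dot\sigma(0)}$ are bounded below by $\Gamma$.

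The main obstacle is making the boundary bookkeeping at $t=0$ rigorous: the lower bound $\Gamma$ on $S_{\dot\sigma(0)}$ must combine with $h'_{\mathscr{K},\Gamma}(0)=\Gamma$ with the correct sign, and one needs a clean orthogonal decomposition of the space of admissible variations into an $m$-dimensional horizontal subspace (governed by $h_{\mathscr{K},\Gamma}$) and an $(n-m-1)$-dimensional vertical subspace $d(\exp_N)_{\dot\sigma(a')}(\Pi)\cap\dot\sigma(a')^\perp$ (governed by $f_\mathscr{K}$). Once both pointwise Hessian inequalities are in place, the Laplacian estimate follows by tracing over an orthonormal frame at $x$ adapted to this decomposition: the radial direction contributes $0$, the $m$ horizontal directions contribute $m(\log h_{\mathscr{K},\Gamma})'(a')$, and the $n-m-1$ vertical directions contribute $(n-m-1)(\log f_\mathscr{K})'(a')$, summing to the stated formula.
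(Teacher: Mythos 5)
The paper does not give its own proof of this result (it cites Kasue directly), so the question is the internal soundness of your argument, and there is a genuine gap at the step you yourself single out as key. The reduction $(\mathrm{Hess}\,\rho_M)_x(V,V)=I_M(J,J)$, with $J$ the $M$-Jacobi field satisfying $J(a')=V$, is fine. But the index lemma you invoke says that $J$ \emph{minimizes} $I_M$ among admissible competitors, i.e.\ $I_M(J,J)\le I_M(W,W)$. Plugging the model-shaped test field $W=\phi E$ into $I_M$ and using $\mathrm{Sec}\le\mathscr{K}$ gives a \emph{lower} bound on $I_M(W,W)$; combined with $I_M(J,J)\le I_M(W,W)$, this yields no information about $I_M(J,J)$ from below. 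Testing the index form in $N$ can only give an \emph{upper} bound on the Hessian — this is how one proves the Laplacian comparison from a Ricci \emph{lower} bound — whereas the theorem here needs a lower Hessian bound from an upper curvature bound, which is the dual situation.

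The correct mechanism is Rauch/Berger comparison: one puts the index-form minimization in the constant-curvature \emph{model} space and transplants the \emph{actual} Jacobi field $J$ there, rather than bringing a model field into $N$. Equivalently and more directly, set $\psi=|J|$. The Jacobi equation together with Cauchy--Schwarz and $\mathrm{Sec}\le\mathscr{K}$ gives the Sturm inequality $\psi''+\mathscr{K}\psi\ge 0$, while the $M$-boundary conditions give $\psi'(0)/\psi(0)\ge\Gamma$ in the horizontal case (respectively $\psi(0)=0,\ \psi'(0)=1$ in the vertical case). Sturm comparison with (\ref{equation h-mathscr-Gamma}) (respectively (\ref{equation f-mathscr})) then forces $(\log\psi)'(a')\ge(\log h_{\mathscr{K},\Gamma})'(a')$ (respectively $(\log f_{\mathscr{K}})'(a')$), and since $(\mathrm{Hess}\,\rho_M)_x(V,V)=\langle J'(a'),J(a')\rangle=\psi(a')\psi'(a')=(\log\psi)'(a')\|V\|^2$, the pointwise estimates follow. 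Your closing step — splitting $\dot\sigma(a')^\perp$ into the $m$-dimensional horizontal and $(n-m-1)$-dimensional vertical blocks and tracing — is the right bookkeeping, but it rests on pointwise estimates that your index-form argument, as stated, does not deliver.
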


 By Theorem \ref{Thm Atsushi Kasue 1982 84}, we obtain the following result.%one has
  \begin{lem}\label{lemma laplace pho lower bound}
 Let $\Sigma$ be  a closed, embedded, oriented  hypersurface  in a closed oriented  Riemannian manifold $N$ of dimension $n+1.$
  If the sectional curvature of $N^{}$ is bounded from above  by a positive constant $K>0$,
% If the Ricci curvature of $N^{n+1}$ is bounded from below by a positive constant $k=nK>0$, 
 then there exists some function $C_{S,K}(\rho)$ %depending only on the norm of second fundamental form $S$, $\rho$  and $K$, 
 such that  the distance function $\rho(x)={\rm dist}(x,  \Sigma)$  satisfies $$  \Delta \rho\geq C_{S,K}(\rho)=-
 \frac{nK\tan(\sqrt{K}\rho)+n\sqrt{S_{\max} K}}{\sqrt{K}-\sqrt{S_{\max}}\tan(\sqrt{K}\rho)},
 $$ 
 where $S_{\max}=\sup_{p\in\Sigma}S(p)$ and
 $0\leq \rho< \frac{1}{\sqrt{K}}
 \arctan(\sqrt{K/ {S_{\max}}})$.
 \end{lem}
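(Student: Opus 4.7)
The plan is to apply Kasue's Theorem \ref{Thm Atsushi Kasue 1982 84} directly with $M = \Sigma$ (so $m = n$) inside the ambient $N^{n+1}$ (so that in Kasue's notation $n-m-1 = 0$), using the constant comparison function $\mathscr{K}(t) \equiv K$ that is available from the sectional curvature upper bound.

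First I would fix a point $x$ in the smooth part of the distance function, let $\sigma:[0,a]\to N$ be a unit-speed distance-minimizing geodesic from $\Sigma$ through $x = \sigma(a')$, and choose the lower bound $\Gamma$ on the eigenvalues of the second fundamental form $S_{\dot\sigma(0)}$. Since $S_\Sigma = \max_\Sigma |h|^2$, every principal curvature $\kappa_i$ of $\Sigma$ (with respect to any unit normal) satisfies $|\kappa_i|\leq \sqrt{S_\Sigma}$, so $\Gamma = -\sqrt{S_\Sigma}$ is a valid lower bound at every boundary point and in every normal direction. Next I would solve the ODE \eqref{equation h-mathscr-Gamma} explicitly for $\mathscr{K}\equiv K$ and $\Gamma = -\sqrt{S_\Sigma}$, obtaining
\begin{equation*}
h_{K,\Gamma}(t) = \cos(\sqrt{K}\,t) - \sqrt{\tfrac{S_\Sigma}{K}}\,\sin(\sqrt{K}\,t).
\end{equation*}
This function is strictly positive exactly when $\tan(\sqrt{K}\,t) < \sqrt{K/S_\Sigma}$, i.e.\ when $t < \frac{1}{\sqrt{K}}\arctan(\sqrt{K/S_\Sigma})$, which is precisely the range in the statement; in particular the hypothesis of Kasue's theorem is satisfied on $[0,a']$.

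Then I would compute the logarithmic derivative
\begin{equation*}
(\log h_{K,\Gamma})'(t) = \frac{-\sqrt{K}\sin(\sqrt{K}\,t) - \sqrt{S_\Sigma}\cos(\sqrt{K}\,t)}{\cos(\sqrt{K}\,t) - \sqrt{S_\Sigma/K}\,\sin(\sqrt{K}\,t)} = -\,\frac{K\tan(\sqrt{K}\,t) + \sqrt{S_\Sigma K}}{\sqrt{K} - \sqrt{S_\Sigma}\,\tan(\sqrt{K}\,t)}.
\end{equation*}
Multiplying by $m = n$ and noting that the second term in Kasue's Laplacian estimate drops out because the codimension $n-m-1$ equals $0$, I obtain
\begin{equation*}
\Delta \rho(x) \geq n\,(\log h_{K,\Gamma})'(a') = -\frac{nK\tan(\sqrt{K}\,\rho) + n\sqrt{S_\Sigma K}}{\sqrt{K} - \sqrt{S_\Sigma}\,\tan(\sqrt{K}\,\rho)},
\end{equation*}
which is the claimed lower bound $C_{S,K}(\rho)$.

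The only potentially subtle point is the choice of $\Gamma$: one must be careful that the sign convention for the shape operator in Kasue's theorem matches the direction $\dot\sigma(0)$ of the minimizing geodesic leaving $\Sigma$, and that the bound $|h|^2 \leq S_\Sigma$ translates into the uniform two-sided bound $|\kappa_i| \leq \sqrt{S_\Sigma}$ on principal curvatures regardless of normal orientation. Once this book-keeping is settled, the rest is the explicit ODE computation above, and the smoothness of $\rho$ on the tubular neighbourhood corresponding to $0 \leq \rho < \frac{1}{\sqrt{K}}\arctan(\sqrt{K/S_\Sigma})$ ensures that the pointwise Hessian/Laplacian estimate is valid where we need it.
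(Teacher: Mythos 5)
Your proposal is correct and follows essentially the same route as the paper: apply Kasue's comparison theorem with $\mathscr{K}\equiv K$, $\Gamma=-\sqrt{S_\Sigma}$, codimension one so the $f_{\mathscr{K}}$ term vanishes, solve the linear ODE to get $h_{K,-\sqrt{S_\Sigma}}(t)=\cos(\sqrt{K}t)-\sqrt{S_\Sigma/K}\sin(\sqrt{K}t)$, and take $n$ times the logarithmic derivative. You actually supply more of the bookkeeping (positivity range of $h$, the principal-curvature bound $|\kappa_i|\le\sqrt{S_\Sigma}$ justifying the choice of $\Gamma$) than the paper's very terse proof does.
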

 \begin{proof}
Let $\mathscr{K}=K$ and $\Gamma=-\sqrt{S_{\max}}$ in (\ref{equation h-mathscr-Gamma}) of Theorem \ref{Thm Atsushi Kasue 1982 84}, we have
$$
h''_{{K}, -\sqrt{S_{\max}}}(\rho)+Kh_{{K}, -\sqrt{S_{\max}}}(\rho)=0,$$
where
$ h_{{K}, -\sqrt{S_{\max}}}(0)=1$  and
$ h'_{{K}, -\sqrt{S_{\max}}}(0)=-\sqrt{S_{\max}}.
$
Then
 \begin{equation*}
 h_{{K}, -\sqrt{S_{\max}}}(\rho)=
 \cos(\sqrt{K}\rho)
 -\sqrt{S_{\max}/K} \sin(\sqrt{K}\rho),
 \end{equation*}
 and 
  \begin{equation*}
  \Delta \rho\geq C_{S,K}(\rho)=n\frac{h'_{{K}, -\sqrt{S_{\max}}}(\rho)}{ h_{{K}, -\sqrt{S_{\max}}}(\rho)}
  =-
   \frac{nK\tan(\sqrt{K}\rho)+n\sqrt{S_{\max} K}}{\sqrt{K}-\sqrt{S_{\max}}\tan(\sqrt{K}\rho)}.
  \end{equation*}
 \end{proof}

\begin{proof}[\textbf{Proof of Theorem $\mathbf{\ref{thm eigenvaule mean curvature}}$}] 
Due to Theorem \ref{CW}, we assume in the following proof that the mean curvature is nonzero.
Note that the first Betti number of $N$ must be zero since the Ricci curvature of $N$ is strictly positive. Combining this with the fact that both $\Sigma$ and $N$ are orientable, by looking at the exact sequences of homology groups, one can see that $\Sigma$ divides $N$ into two components $\Omega_1$ and $\Omega_2$ such that $\partial \Omega_1=\partial \Omega_2=\Sigma$.
 Let $u$ be the first eigenfunction of $\Sigma$, that is,
 $$
{\Delta} u+\lambda_1 u=0.
 $$
 Let $\nu$ be the unit outward normal of $\Sigma$ in $\Omega=\Omega_1$.  We denote by $h(X, Y)=$ $g\left(\nabla_X \nu_, Y\right)$ and $H=\operatorname{tr}_{g_{\Sigma}} h$ the second fundamental form and the mean curvature of $\Sigma$.
 Without loss of generality,  we  assume that
 $$\int_{\Sigma}h\left(\nabla_{\Sigma} f, \nabla_{\Sigma} f\right)\geq0.$$
% Suppose
 % \begin{equation*}
%  \rho=\rho(x)={\rm dist}(x,  \Sigma).
 % \end{equation*}
 Let $f$ be the solution of the Dirichlet problem such that:
\begin{equation}\label{equation Deltaf0}
 \begin{cases}
 \Delta f=0, & \text { in } \Omega; 
 \\ f=u, & \text { in } \partial \Omega=\Sigma.
 \end{cases}
\end{equation}
 Then $f$ is a function defined on $\Omega$ smooth up to $\partial \Omega$. % and we have   
 By Reilly's formula, we have
\begin{equation}\label{equation Reilly f manf}
 \int_{\Omega} (\Delta f)^2-\left|{\rm Hess} f\right|^2-\mathrm{Ric}\left(\nabla f,\nabla f \right)  
\geq  \int_{\Sigma} 2 f_\nu  \Delta_{\Sigma} f+H\left( f_\nu\right)^2. 
\end{equation}
 Due to (\ref{equation Deltaf0}), one has
\begin{equation}\label{equation fmu}
- \int_{\Sigma}  f_\nu  \Delta_{\Sigma} f=
 \lambda_1
 \int_{\Sigma}  f_\nu f=
  \lambda_1
  \int_{\Omega} f  \Delta f+|\nabla f|^2= 
   \lambda_1 \int_{\Omega} |\nabla f|^2.
\end{equation}
Next, we choose the normal coordinate $\left\lbrace e_1,e_2,\cdots,e_{n+1}\right\rbrace $ at the point $p\in \Omega$ such that $\nabla f(p)=f_1(p)=\left\langle \nabla f,e_1\right\rangle e_1(p)$. At $p\in \Omega$, one has
$$
\nabla_{e_i}|\nabla f|^2=\sum_{j}\nabla_{e_i}f_j^2=2\sum_{j}f_jf_{ij}=2f_1f_{1i},
$$
and %by $\Delta f=0$ in $\Omega$ we have
$$
|\nabla|\nabla f|^2|^2=\sum_{i=1}^{n+1}|\nabla_{e_i}|\nabla f|^2|^2
=4\sum_{i=1}^{n+1}f_1^2f_{1i}^2=4|\nabla f|^2\sum_{i=1}^{n+1}f_{1i}^2.
$$
Since $\Delta f=0$ in $\Omega$, we have
\begin{align*}
2\sum_{i=1}^{n+1}f_{1i}^2
&=\frac{2n}{n+1}f_{11}^2+
\frac{2}{n+1}
f_{11}^2+
\sum_{i=2}^{n+1}\left( f_{1i}^2+f_{i1}^2\right) \\
&=\frac{2n}{n+1}f_{11}^2+
\frac{2}{n+1}
\left( \sum_{i=2}^{n+1}f_{ii}\right) ^2+
\sum_{i=2}^{n+1}\left( f_{1i}^2+f_{i1}^2\right)\\
&\leq \frac{2n}{n+1}\sum_{i=1}^{n+1}f_{ii}^2+\sum_{i=2}^{n+1}\left( f_{1i}^2+f_{i1}^2\right)\\
&\leq\frac{2n}{n+1}\sum_{i,j=1}^{n+1}f_{ij}^2=\frac{2n}{n+1}|{\rm Hess}f|^2.
\end{align*}
Hence 
\begin{equation}\label{equation Hessf}
|\nabla|\nabla f|^2|^2\leq
\frac{4n}{n+1}
|\nabla f|^2
|{\rm Hess}f|^2.
\end{equation}
Suppose  $\rho_a=\frac{1}{\sqrt{K}}
      \arctan(t\sqrt{K/ {S_{\max}}})$ is  a fixed real number, where
      $0<t<r=\min \{t_R, 1 \}$ and   $R=\frac{1}{\sqrt{K}}
            \arctan(t_R\sqrt{K/ {S_{\max}}})$.
            % one has $ 0< \rho_a< \frac{1}{\sqrt{K}}    \arctan(\sqrt{K/ {S_{\max}}})$. 
Let $\eta $ be a cutoff function defined by
\begin{equation*}
\eta(\rho)=
\left\{
\begin{aligned}
  0,&\quad \rho>\rho_a;\\
 1- \rho/\rho_a,&\quad 0\leq \rho\leq \rho_a.%;%\\
%  1,&\quad x<0.
\end{aligned}
\right.
\end{equation*}
By Lemma \ref{lemma laplace pho lower bound} and (\ref{equation Hessf}), we have
\begin{align}\label{equation H mean curvature}
 \int_{\Sigma}  H\left( f_\nu\right)^2
 &\geq
 -|H|_{\max}\int_{\Sigma}  \left( f_\nu\right)^2 \geq
  -|H|_{\max}\int_{\Sigma} |\nabla f|^2\\
  &=
    |H|_{\max}\int_{\Omega} {\rm div}\left(\eta  |\nabla f|^2\nabla \rho\right)  \notag\\
 &=|H|_{\max}\int_{\Omega} \eta|\nabla f|^2\Delta \rho +
 \eta\left\langle \nabla |\nabla f|^2 ,\nabla\rho\right\rangle+
  |\nabla f|^2\left\langle \nabla \eta ,\nabla\rho\right\rangle\notag\\
 &\geq |H|_{\max}\int_{\Omega}
\eta\left(   C_{S,K}-\frac{1}{\rho_a}\right)  |\nabla f|^2 
 -\eta|\nabla|\nabla f|^2|\notag\\
 &\geq |H|_{\max}\int_{\Omega}\eta\left(    C_{S,K}-\frac{1}{\rho_a}\right)  |\nabla f|^2 -2\sqrt{\frac{n}{n+1}}|\nabla f ||{\rm Hess}f| \eta\notag\\
 &\geq\int_{\Omega}
\eta |H|_{\max}\left(    C_{S,K}-\frac{1}{\rho_a} -\frac{n}{n+1}  |H|_{\max}\right) |\nabla f|^2  -
\eta|{\rm Hess}f|^2 \notag\\
 &\geq\int_{\Omega}
 |H|_{\max}\left(   C_{S,K}(\rho_a)-\frac{1}{\rho_a} -\frac{n}{n+1}|H|_{\max}\right) |\nabla f|^2  -
|{\rm Hess}f|^2 \notag.
\end{align}
By (\ref{equation Reilly f manf}), (\ref{equation fmu}) and (\ref{equation H mean curvature}), we have
$$
\left[ 2\lambda_1-k-
   |H|_{\max}
   \left(   C_{S,K}(\rho_a)-\frac{1}{\rho_a} -\frac{n}{n+1}|H|_{\max}\right)
 \right] 
 \int_{\Omega} |\nabla f|^2\geq 0.
$$
Hence
$$
2\lambda_1\geq k+
   |H|_{\max}
   \left( C_{S,K}(\rho_a)-\frac{1}{\rho_a} -\frac{n}{n+1}|H|_{\max}\right), 
$$
for all $ 0< \rho_a< \frac{1}{\sqrt{K}}
  \arctan(r\sqrt{K/ {S_{\max}}})$.
%  Choosing $\rho_a=\frac{1}{\sqrt{K}}     \arctan(t^{}\sqrt{K/ {S_{\max}}})$ $(0<t<r)$, one has
Let
 $$
  \begin{aligned}
F_{S,K}(\rho_a)
&:=C_{S,K}(\rho_a)-\frac{1}{\rho_a}\\
&=
-\frac{nK\sqrt{1/S_{\max}}t+n\sqrt{S_{\max}}}{1-t}-
\frac{\sqrt{K}}{ \arctan(t\sqrt{K/ {S_{\max}}})}\\
&=-\frac{nKt+nS_{\max}}{(1-t)\sqrt{S_{\max}}}-
\frac{\sqrt{K}}{ \arctan(t\sqrt{K/ {S_{\max}}})},
 \end{aligned}
 $$
one has
    $$\lambda_1(\Sigma)\geq\frac {k}2+
    \frac{|H|_{\max}}{2}
       \left(   \mathcal{C}(r) -\frac{n}{n+1}|H|_{\max}\right),$$
       where
  $$
  \mathcal{C}(r)=\sup_{\rho_a\in \left( 0, 
   \frac{1}{\sqrt{K}}
    \arctan(r\sqrt{K/ {S_{\max}}})\right) }
  F_{S,K}(\rho_a)=\sup_{t\in \left(0,r \right) } 
  \left[ -
   \frac{nKt+nS_{\max}}{(1-t)\sqrt{S_{\max}}}-\frac{\sqrt{K}}{\arctan(t^{}\sqrt{K/ {S_{\max}}})}\right] .
  $$
This completes the proof.
 \end{proof}
% \begin{rem}
 %Under the assumptions of  Theorem \ref{thm eigenvaule mean curvature}, we have
% $$\lambda_1(\Sigma)\geq\frac {k}2+ \frac{|H|_{\max}}{2}   \left(   -\frac{\sqrt{K}}{ \arctan(    \frac{1}{2}    \sqrt{K/ {S_{\max}}})}-2n\sqrt{S_{\max}}-       nK\sqrt{1/S_{\max}}  -\frac{n}{n+1}|H|_{\max}\right),$$by  choosing  $t= \frac{1}{2}$ in \eqref{equation C constant}   \end{rem}

Since the definition of  $ \mathcal{C}(r)$ is rather complex, we consider the special case where $t_R = \frac{1}{2}$, yielding the following simpler and clearer expression:
\begin{cor}\label{cor eigenvaule mean curvature}
%Let $\Sigma^n$ be  a compact, embedded, oriented  hypersurface   in a compact oriented  Riemannian manifold $N^{n+1}$. If  the Ricci curvature of $N^{}$
%is bounded from below by a positive constant $k>0$, the sectional curvature of $N^{}$ is bounded from above  by a positive constant $K>0$ and 
 Under the conditions of  Theorem \ref{thm eigenvaule mean curvature},
if the
normal injectivity radius  (see (\ref{equation rolling radius}))  $R \geq \frac{1}{\sqrt{K}}
      \arctan(\frac{1}{2}\sqrt{K/ {S_{\max}}})$, then 
 $$\lambda_1(\Sigma)\geq
 \frac {k}{2}-
 \frac{|H|_{\max}}{2}
    \left(
    nK+\frac{2\sqrt{S_{\max}K}}{\arctan(\sqrt{K})}+2n\sqrt{S_{\max}} +\frac{n}{n+1}|H|_{\max}
    \right).$$
  \end{cor}
  
\begin{proof}
%[\textbf{Proof of Theorem $\mathbf{\ref{cor eigenvaule mean curvature}}$}]
Without loss of generality,
we suppose  $\Sigma$ is non-totally geodesic.
By Theorem \ref{thm eigenvaule mean curvature} one has
$$\lambda_1(\Sigma)\geq
\frac {k}{2}+
\frac{|H|_{\max}}{2}
   \left(\mathcal{C}(r) -\frac{n}{n+1}|H|_{\max}\right),$$
   where $\mathcal{C}(r)$ (see \eqref{equation C constant}) is a non-positive   constant, depending  on $n,r,K,S_{\max}.$
    Therefore, it suffices to estimate the lower bound of $\mathcal{C}(r)$. Next, consider the following two cases.\\
\textbf{Case $({\rm i})$.}  If $S_{\max}\leq 1$, then we choose $t= s\sqrt{S_{\max}}\leq \frac{1}{2}$  $(0<s<1)$,  by  \eqref{equation C constant} we have
\begin{equation*}
  \begin{aligned}
\mathcal{C}(r)
&\geq 
-
 \frac{ntK+nS_{\max}}{(1-t)\sqrt{S_{\max}}}-\frac{\sqrt{K}}{\arctan(t^{}\sqrt{K/ {S_{\max}}})}
 =  -\frac{nsK+n\sqrt{S_{\max}}}{1-s\sqrt{S_{\max}}}-\frac{\sqrt{K}}{\arctan(s^{}\sqrt{K})}\\
 &\geq 
 -\frac{nsK+n\sqrt{S_{\max}}}{1-s}-\frac{\sqrt{K}}{\arctan(\sqrt{K})s}\\
 &=nK -\frac{nK+n\sqrt{S_{\max}}}{1-s}-\frac{\sqrt{K}}{\arctan(\sqrt{K})s},
 \end{aligned}
\end{equation*}
the last inequality follows from the convexity of the function $\arctan(s)$ $(s>0)$.
Choosing $s_0\in(0,1)$  such that 
 $$
 \frac{nK+n\sqrt{S_{\max}}}{1-s_0\sqrt{S_{\max}}}
 =\frac{\sqrt{K}}{\arctan(\sqrt{K})s_0},
$$
i.e.,
$s_0=\left[ \sqrt{S_{\max}}+n\left(\sqrt{S_{\max}/K}+
\sqrt{K} \right) \arctan(\sqrt{K})\right] ^{-1}$. Then we have $t= s_0\sqrt{S_{\max}}\leq \frac{1}{2}$ and 
$$
\mathcal{C}(r)\geq nK -\frac{2\sqrt{K}}{\arctan(\sqrt{K})s_0}
=-nK-\frac{2\sqrt{S_{\max}K}}{\arctan(\sqrt{K})}-2n\sqrt{S_{\max}}.
$$
\textbf{Case $({\rm ii})$.}  If $S_{\max}\geq 1$, then we choose  $t= \frac{1}{2}$ and by  \eqref{equation C constant}   one has
\begin{equation*}
  \begin{aligned}
\mathcal{C}(r)
&\geq -
      nK\sqrt{1/S_{\max}}
 -\frac{\sqrt{K}}{ \arctan(
   \frac{1}{2}
   \sqrt{K/ {S_{\max}}})}-2n\sqrt{S_{\max}}\\
 &\geq 
 -nK\sqrt{1/S_{\max}}
  -\frac{2\sqrt{K{S_{\max}}}}{ \arctan(\sqrt{K})}-2n\sqrt{S_{\max}}\\
   &\geq 
   -nK
    -\frac{2\sqrt{K{S_{\max}}}}{ \arctan(\sqrt{K})}-2n\sqrt{S_{\max}}.
\end{aligned}
\end{equation*}
 This completes the proof.
 \end{proof}

 \begin{lem}[Proposition 2.1 in \cite{Duncan  Sire and  Spruck IMRN 2024}]\label{lem DS mean-convex imrn2024}
 Suppose $\Sigma$ is a smooth,  closed, and embedded mean-convex hypersurface in  $\mathbb{S}^{n+1}$.
Then $\Sigma_t=\left\lbrace \exp_x(-t\nu(x))\in \mathbb{S}^{n+1}: x\in \Sigma \right\rbrace $  is a smooth, closed, and embedded strictly mean-convex hypersurface in $\mathbb{S}^{n+1}$ for  $|t| \in \left(0, \arctan(\sqrt{1/ {S_{\max}}})\right) $.
 \end{lem}
   \begin{proof}[\textbf{Proof of  Corollary $\mathbf{\ref{cor sphere and volume}}$}]
  For the unit sphere $\mathbb{S}^{n+1}$,   we have $k=n$ and $K=1$.  By Ge-Li (cf. Theorem 1.6 in  \cite{Ge Li 2021 Perdomo conjecture}), we have
      $$
S_{\max}{\rm Vol}(\Sigma)\geq    \int_{\Sigma}S
    \geq
    \lambda_1(\Sigma)
    \frac{{\rm Vol}^2(\Sigma)-{\rm Vol}^2(\mathbb{S}^{n})}{{\rm Vol}(\Sigma)},
    $$
    where the equality holds if and only if $\Sigma$ is totally geodesic.
       By Lemma \ref{lem DS mean-convex imrn2024},    the normal injectivity radius of $\Sigma$ satisfies that
       $$R \geq \frac{1}{\sqrt{K}}
             \arctan(\frac{1}{2}\sqrt{K/ {S_{\max}}}).$$
                 Combining the inequalities above, %$nS_{\max}\geq |H|_{\max}^2$ and Theorem  \ref{thm eigenvaule mean curvature}, 
    we complete the proof by  Corollary \ref{cor eigenvaule mean curvature}. 
     \end{proof} 
     \begin{acknow}
     The authors are deeply grateful to the anonymous referee for their valuable comments and careful review. Their thoughtful feedback has been instrumental in strengthening the paper.
     %The authors thank the anonymous referee for their valuable suggestions.
     \end{acknow}
\singlespacing
{\noindent \bf  Funding} F. G. Li is partially supported by  NSFC (No. 12171037, 12271040, 12501061) and Research Start up Funding of Beijing Institute of Technology (No. 5640011253301). J. R. Yan is partially supported by Zelevinsky Postdoctoral Fellowship.
\singlespacing
{\noindent \bf  Data Availability Statement} This manuscript has no associated data.
    \singlespacing
{\noindent \bf  Declarations} 

    {\noindent\bf Conflict of interest}
    The authors declare that they have no conflict of interest.

\end{document}